\title{\Large  A  $\overrightarrow{P_{3}}$-decomposition of tournaments and bipartite digraphs
\thanks{Research supported by NSFC (No. 11571294) and by Xinjiang Talent Youth Project (No. 2013721012)}}
\author{ {Fangxia Wang, Baoyindureng Wu \footnote{Corresponding author.
Email: baoywu@163.com (B. Wu) }, Xinhui An}\\
\small  College of Mathematics and System Sciences, Xinjiang
University \\ \small  Urumqi, Xinjiang 830046, P.R.China \\}
\date{}
\newtheorem{theorem}{Theorem}[section]
\newtheorem{lemma}[theorem]{Lemma}
\newtheorem{corollary}[theorem]{Corollary}
\newtheorem{proposition}[theorem]{Proposition}
\begin{document}
\maketitle {\small \noindent{\bfseries Abstract} A
$\overrightarrow{P_{3}}$-decomposition of a directed graph $D$ is a
partition of the arcs of $D$ into directed paths of length $2$. In
this paper, we give a characterization for a tournament and a
bipartite digraph admitting a
$\overrightarrow{P_{3}}$-decomposition. This solves a problem posed
by Diwan ($\overrightarrow{P_{3}}$-decomposition of directed graphs,
Discrete Appl. Math., http://
dx.doi.org/10.1016/j.dam.2016.01.039.).


\noindent{\bfseries Keywords:} Path-decomposition; Tournaments;
Bipartite digraphs; Line graphs; Perfect matchings

\section {\large Introduction}

A {\it decomposition} of a graph $G$ is a family $\mathcal{F}$ of
edge-disjoint subgraphs of $G=(V(G), E(G))$ such that
$\cup_{F\in\mathcal{F}}E(F)=E(G)$. In particular, a
$P_3$-decomposition of a graph $G$ is a partition of the edge set of
$G$ into paths of length 2.

The same notion of decomposition applies to directed graphs as well,
where each arc of $D$ is contained in exactly one element of the
decomposition. A \emph{$\overrightarrow{P_{3}}$-decomposition} of a
directed graph $D$ is a partition of the arcs of $D$ into directed
paths of length 2. A classical result on decomposition, stated as
follows, is due to Kotzig \cite{K}.

\begin{theorem} A connected graph $G$ has a $P_3$-decomposition if and only if
the size of $G$ is even.

\end{theorem}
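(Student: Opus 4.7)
The necessity is immediate: each $P_3$ in the decomposition contributes exactly two edges to $G$, so $|E(G)|$ must be even.

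For sufficiency, my plan is to translate the problem into a perfect matching question on the line graph $L(G)$. Two edges of $G$ form a path of length two if and only if they share exactly one endpoint, which is precisely the adjacency relation in $L(G)$. Thus a $P_3$-decomposition of $G$ corresponds bijectively to a perfect matching of $L(G)$, and it suffices to exhibit such a matching.

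I would then verify three standard facts about $L(G)$: the vertex count $|V(L(G))|=|E(G)|$ is even by hypothesis; $L(G)$ is connected whenever $G$ is connected with at least two edges; and $L(G)$ is claw-free, since the edges of $G$ incident to any fixed vertex induce a clique in $L(G)$, so the neighborhood of every vertex of $L(G)$ is the union of exactly two cliques and contains no independent set of size three. The plan is then to invoke Sumner's theorem that every connected claw-free graph of even order admits a perfect matching, applied to $L(G)$, which delivers the desired decomposition.

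The main obstacle is justifying this appeal to Sumner's theorem, which is itself a nontrivial result. A self-contained alternative is induction on $|E(G)|$, after strengthening the inductive hypothesis to the statement that a graph admits a $P_3$-decomposition whenever each of its connected components has even size (needed because removing a $P_3$ may disconnect $G$). In the inductive step one locates a $P_3$-subgraph $P$ such that every component of $G-E(P)$ still has an even number of edges, then applies induction componentwise. The delicate point is the existence of such a $P$; I expect this to require separate arguments in the tree case (peel off a pendant together with a carefully chosen adjacent edge, using a longest-path argument at the neighbor of the pendant) and in the case where $G$ contains a cycle (remove two consecutive cycle edges, chosen so that the parity of the edge count in each resulting component is preserved).
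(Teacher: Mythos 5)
This is Kotzig's classical theorem, which the paper cites without proof, so there is no in-paper argument to compare against; I can only assess your proposal on its own terms. Your primary route is correct: the reduction to a perfect matching of $L(G)$ is exactly the bijection the paper later formalizes for digraphs as Proposition 1.2, your three verifications (even order of $L(G)$, connectivity of $L(G)$ for connected $G$ with at least two edges, claw-freeness of any line graph) are all standard and accurate, and the theorem of Sumner (and, independently, Las Vergnas) that every connected claw-free graph of even order has a perfect matching then finishes the job. This is very much in the spirit of the paper's overall method --- translate $\overrightarrow{P_3}$-decomposition into matching theory on the line graph --- though it is a heavier hammer than Kotzig's result needs, and it is anachronistic (Sumner's theorem postdates Kotzig by nearly two decades), so it would not serve as ``the'' classical proof. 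Your self-contained alternative by induction is only a sketch: the crux --- exhibiting a $P_3$ whose removal leaves every component with an even number of edges, particularly when the path must use a pendant edge $vu$ and the choice of the second edge at $u$ determines whether the remainder splits into odd pieces --- is precisely the part you defer, and as stated it is a genuine gap rather than a routine verification (a cleaner elementary proof processes the edges greedily along a spanning-tree ordering, pairing leftover edges at each vertex and pushing at most one unpaired edge toward the root). In summary: the first argument stands as a complete proof modulo the citation of Sumner's theorem; the second does not yet.
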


However, no characterization of directed graphs that admit a
$\overrightarrow{P_3}$-decomposition is known in general. A directed
graph is said to be \emph{symmetric} if for every pair of distinct
vertices $u, v$, there is an arc from $u$ to $v$ if and only if
there is an arc from $v$ to $u$. In other words, the directed graph
is obtained by replacing each edge in an undirected graph by two
oppositely directed arcs. Diwan \cite{D} gave a characterization of
symmetric directed graphs that do not admit a
$\overrightarrow{P_3}$-decomposition.

\vspace{2mm} Let $G$ be a graph. The line graph of $G=(V(G), E(G))$,
denoted by $L(G)$, is the graph whose vertex set is $E(G)$, in which
two vertices are adjacent if and only if two edges have an end in
common in $G$. Diwan \cite{D} introduced a similar notion for a
digraph $D$ in view of Proposition 1.2 below. We adopt the same
notation of the line graph. However, it is defined for a digraph.


\vspace{3mm} \noindent{\bf Definition 1.1.} For a digraph $D=(V(D),
A(D))$, the line graph $\emph{L(D)}$ of $D$ is the graph with vertex
set $A(D)$, in which two vertices $a_1$ and $a_2$ are adjacent if
and only if $a_{1}, a_{2}$ induce a directed path of length $2$.

\vspace{2mm} A \emph{matching} in a graph is a set of pairwise
nonadjacent links. A \emph{perfect matching} is one which covers
every vertex of the graph.

\vspace{2mm} The following result is frequently used in the sequel.

\begin{proposition} A digraph $D$ has a $\overrightarrow{P_{3}}$-decomposition if and only
if $L(D)$ has a perfect matching.
\end{proposition}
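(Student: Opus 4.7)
The plan is to exhibit a direct bijective correspondence between $\overrightarrow{P_3}$-decompositions of $D$ and perfect matchings of $L(D)$, exploiting the fact that Definition 1.1 was engineered so that two arcs are adjacent in $L(D)$ precisely when they together form a directed path of length $2$.

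For the forward direction, I would start with a $\overrightarrow{P_3}$-decomposition $\mathcal{F}=\{Q_1,\ldots,Q_k\}$ of $D$. Each $Q_i$ consists of exactly two arcs $a_i,b_i$ whose union is a directed $P_3$, so by Definition 1.1 the unordered pair $\{a_i,b_i\}$ is an edge of $L(D)$. Since $\mathcal{F}$ partitions $A(D)=V(L(D))$, the collection $M=\{\{a_i,b_i\}:1\le i\le k\}$ consists of pairwise vertex-disjoint edges that together cover every vertex of $L(D)$, and is therefore a perfect matching.

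For the converse, beginning with a perfect matching $M$ of $L(D)$, I would take each edge $\{a,b\}\in M$ and invoke Definition 1.1 to obtain a pair of arcs of $D$ forming a directed path of length $2$. Because $M$ is a perfect matching, each vertex of $L(D)$, equivalently each arc of $D$, belongs to exactly one such pair; hence the family of these directed paths of length $2$ is a partition of $A(D)$ into $\overrightarrow{P_3}$'s.

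There is no genuine obstacle: the entire content is the observation that adjacency in $L(D)$ encodes exactly the relation ``can be combined into a $\overrightarrow{P_3}$'' on arcs, so the statement reduces to rewriting the two structures in each other's language. The only small points requiring care are that a directed $P_3$ uses exactly two arcs (so it corresponds to a single edge of $L(D)$, not a larger subgraph), and that the partition condition on arcs of $D$ is equivalent to the vertex-covering condition for a matching in $L(D)$.
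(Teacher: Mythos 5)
Your argument is correct: the bijection between arc-pairs of a $\overrightarrow{P_3}$-decomposition and edges of a perfect matching of $L(D)$ is exactly the intended content of Proposition 1.2, which the paper states without proof as an immediate consequence of Definition 1.1. Your write-up supplies precisely that justification, so there is nothing to add.
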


By the previous result, the main tools we will use are several
classical results in matching theory. For a graph $G$, $c(G)$,
$c_o(G)$ and $i(G)$ denote the number of components, the number of
odd components, and the number of isolated vertices of $G$,
respectively.

\begin{theorem} (Tutte \cite{Tut}) A graph $G$ has a perfect matching if and only if $c_o(G-S)\leq |S|$ for all $S\subseteq
V(G)$.
\end{theorem}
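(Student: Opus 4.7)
The plan is to prove both directions, with necessity being straightforward and sufficiency requiring an extremal argument.

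For necessity, I would fix a perfect matching $M$ and an arbitrary $S \subseteq V(G)$. For every odd component $C$ of $G-S$, since $|V(C)|$ is odd, the matching $M$ restricted to $C$ cannot saturate all of $V(C)$; hence some vertex of $C$ is matched by $M$ to a vertex outside $C$. Because $C$ is a component of $G-S$, this external endpoint lies in $S$. The resulting ``ambassador'' vertices in $S$ are distinct across distinct odd components since $M$ is a matching, so $|S| \geq c_o(G-S)$.

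For sufficiency, I would argue by contradiction. Assume $G$ satisfies Tutte's condition but admits no perfect matching; taking $S=\emptyset$ gives $c_o(G)=0$, so $|V(G)|$ is even. The condition $c_o(G-S)\leq |S|$ is preserved under the addition of edges (new edges can only merge components, and merging at most decreases the number of odd components by one while another odd component may appear, but a careful check shows the inequality persists). Hence I may enlarge $G$ to an edge-maximal supergraph $G^*$ on the same vertex set that still satisfies Tutte's condition but has no perfect matching; adding any non-edge to $G^*$ produces a perfect matching. Let $U$ be the set of vertices of $G^*$ adjacent to every other vertex.

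The structural heart of the argument is the claim that each component of $G^*-U$ is a complete graph. Suppose for contradiction that some component contains vertices $x,y,z$ with $xy,yz\in E(G^*)$ and $xz\notin E(G^*)$; since $y\notin U$ there is also a vertex $w$ with $yw\notin E(G^*)$. By maximality, $G^*+xz$ and $G^*+yw$ each admit perfect matchings $M_1, M_2$ using the new edges respectively. I would analyze the symmetric difference $M_1 \triangle M_2$, which is a disjoint union of even alternating cycles, and trace the cycle $C$ containing the edge $yw$. Depending on whether $C$ also contains $xz$, I would exchange edges along a suitable portion of $C$ together with the edge $xy$ or $yz$ to assemble a perfect matching of $G^*$ itself, a contradiction.

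With components of $G^*-U$ all complete, I would finish by a counting argument: if $c_o(G^*-U)\leq |U|$, then one can greedily match each odd component of $G^*-U$ to a distinct vertex of $U$ (using that $U$-vertices are universal), match the remaining vertices inside each (now even-order) component, and pair up leftover vertices of $U$ using the complete graph they induce, producing a perfect matching of $G^*$ and contradicting our assumption. Hence $c_o(G^*-U)>|U|$, violating Tutte's condition for $S=U$. The main obstacle will be the symmetric-difference exchange establishing that components of $G^*-U$ are complete, as it requires careful case analysis of how the alternating cycle through $yw$ interacts with $x, y, z$.
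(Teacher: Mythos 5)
The paper does not prove this statement at all: Theorem 1.3 is quoted as a classical result and attributed to Tutte \cite{Tut}, so there is no internal proof to compare yours against. Judged on its own, your proposal is the standard extremal (Lov\'asz-style) proof of Tutte's theorem and its overall structure is sound: the ambassador argument for necessity is correct, and the sufficiency argument via an edge-maximal counterexample $G^*$, the set $U$ of universal vertices, the completeness of the components of $G^*-U$ via the symmetric difference $M_1\triangle M_2$, and the final greedy counting with the parity of the leftover vertices of $U$ is exactly the known route.

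Two small points deserve tightening. First, your parenthetical justification that Tutte's condition is preserved under adding edges is garbled: the clean statement is that adding an edge never increases $c_o(G-S)$ for any fixed $S$, since the new edge either lies inside a component of $G-S$, is incident with $S$, or merges two components (odd+odd becomes even, dropping $c_o$ by $2$; odd+even stays odd; even+even stays even). Second, the case in which $xz$ and $yw$ lie on the same alternating cycle $C$ is only sketched; the exchange there is the genuinely delicate step (walk along $C$ from $y$ through $w$ until you first meet $x$ or $z$, say $x$, take the $M_1$-edges of that segment together with the edge $zy$ and suitable $M_2$-edges on the remainder, and $M_1$ or $M_2$ off $C$), and you should write it out, but it is standard and does go through. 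With these details filled in, the proof is correct.
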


For a graph $G$, $H$ is called a {\em fractional perfect matching}
of $G$ if $H$ is a spanning subgraph of $G$ such that each component
of $H$ is $K_2$ or a cycle.

\begin{theorem} (Tutte \cite{Tutte})  A graph $G$ has a fractional perfect matching if and only if
$i(G-S)\leq |S|$ for any $S\subseteq V(G)$.
\end{theorem}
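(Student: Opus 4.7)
The plan is to establish the two directions separately: necessity by a direct structural argument on the components of a fractional perfect matching, and sufficiency by passing to the bipartite double cover of $G$ and invoking Hall's marriage theorem. The main work lies in reconciling the Hall-type condition that naturally emerges there with the Tutte-type isolated-vertex condition in the statement.

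For necessity, suppose $G$ admits a fractional perfect matching $H$, and fix $S \subseteq V(G)$. Let $I$ denote the set of isolated vertices of $G-S$, so that $N_G(v) \subseteq S$ for every $v \in I$. Vertices of $I$ are pairwise non-adjacent in $G$ and hence in $H$, so in each $K_2$-component of $H$ at most one endpoint lies in $I$, and if so its partner lies in $S$. In each cycle-component $C$ of $H$, every vertex of $I \cap C$ has both $C$-neighbors in $S$ while distinct vertices of $I$ are non-adjacent along $C$, from which a routine alternation argument gives $|I \cap C| \leq |S \cap C|$. Summing over components yields $i(G-S) \leq |S|$.

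For sufficiency, I pass to the bipartite double cover $B(G)$ with parts $V^+ = V(G) \times \{+\}$ and $V^- = V(G) \times \{-\}$, in which $(u,+)(v,-)$ is an edge whenever $uv \in E(G)$. A perfect matching $M$ of $B(G)$ determines a fixed-point-free permutation $f$ of $V(G)$ by setting $f(v) = u$ when the edge $(v,+)(u,-)$ lies in $M$. The $2$-cycles of $f$ correspond to $K_2$-components of a spanning subgraph of $G$, while its longer cycles correspond to cycle-components, giving a bijection between perfect matchings of $B(G)$ and fractional perfect matchings of $G$. Since $N_{B(G)}(X \times \{+\}) = N_G(X) \times \{-\}$, Hall's theorem applied to $B(G)$ says that $B(G)$ has a perfect matching iff $|N_G(X)| \geq |X|$ for every $X \subseteq V(G)$.

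The step I expect to be most delicate is the equivalence of this Hall-type condition with the isolated-vertex condition. The easy direction recycles the necessity argument: the isolated vertices of $G-S$ form an independent set whose $G$-neighborhood lies in $S$. For the converse, given $X \subseteq V(G)$ I partition $X = X_0 \cup X_1$, where $X_0$ consists of the vertices of $X$ with no neighbor in $X$. Then $X_0$ is independent in $G$, and applying the isolated-vertex hypothesis with $S = N_G(X_0)$ gives $|X_0| \leq |N_G(X_0)|$. Each vertex of $X_1$ must have a neighbor in $X_1$ (its $X$-neighbors cannot lie in $X_0$), so $X_1 \subseteq N_G(X_1) \subseteq N_G(X)$. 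Finally $N_G(X_0) \cap X = \emptyset$, so $N_G(X_0)$ and $X_1$ are disjoint subsets of $N_G(X)$, yielding $|N_G(X)| \geq |N_G(X_0)| + |X_1| \geq |X_0| + |X_1| = |X|$. Piecing the equivalences together completes the proof.
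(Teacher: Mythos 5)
Your proof is correct. Note that the paper does not prove this statement at all: it is quoted as a classical theorem of Tutte (the 1952 reference) and used as a black box, so there is no in-paper argument to compare against. Your argument is a complete and standard self-contained derivation. The necessity direction is fine: in each $K_2$-component of the fractional perfect matching $H$ at most one end lies in $I$ with its partner forced into $S$, and on each cycle-component the map sending a vertex of $I$ to its clockwise neighbour is an injection into $S$, so summing over the (spanning) components gives $i(G-S)\leq |S|$. The sufficiency direction via the bipartite double cover is the classical reduction: perfect matchings of $B(G)$ correspond to fixed-point-free permutations of $V(G)$ whose cycles yield exactly the $K_2$'s and cycles of a fractional perfect matching (your word ``bijection'' is a slight overstatement, since a long cycle has two orientations, but only the existence equivalence is needed), and your reconciliation of Hall's condition $|N_G(X)|\geq |X|$ with the condition $i(G-S)\leq |S|$ via the split $X=X_0\cup X_1$ and the choice $S=N_G(X_0)$ is airtight, including the key observations that $N_G(X_0)\cap X=\emptyset$ and $X_1\subseteq N_G(X_1)$. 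What your approach buys is that the paper's appeal to Theorem 1.4 becomes self-contained modulo Hall's theorem (Theorem 1.5), which the paper already assumes; the cost is nil, since no structure beyond the statement is needed elsewhere.
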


For a subset $S\subseteq V(G)$, $N(S)$ denotes the set of vertices
which are adjacent to vertices of $S$ in $G$.

\begin{theorem} (Hall \cite{H}) A bipartite graph $G[X, Y]$ has a perfect matching which covers every
vertex in $X$ if and only if  $|N(S)|\geq|S|$ for all $S\subseteq
X$.
\end{theorem}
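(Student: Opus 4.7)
The plan is to prove Hall's theorem by induction on $|X|$, treating the two directions separately. The necessity direction is essentially a pigeonhole observation: if $M$ is a perfect matching saturating $X$, then for any $S\subseteq X$, the edges of $M$ incident with $S$ send the vertices of $S$ to $|S|$ distinct vertices of $Y$, all of which lie in $N(S)$, so $|N(S)|\geq |S|$. I would dispose of this in a single sentence and spend the effort on sufficiency.

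For sufficiency, assume $|N(S)|\geq |S|$ for every $S\subseteq X$ and induct on $|X|$. The base case $|X|=1$ follows because $|N(\{x\})|\geq 1$ provides a neighbor to match $x$ with. For the inductive step I would split into two cases according to whether Hall's condition is strict on all proper nonempty subsets. In the \emph{easy case}, suppose $|N(S)|\geq |S|+1$ for every nonempty proper $S\subsetneq X$. Pick any $x\in X$ and any $y\in N(x)$, put $xy$ into the matching, and set $G':=G-\{x,y\}$. For any $S\subseteq X\setminus\{x\}$, removing $y$ decreases $|N(S)|$ by at most one, so $|N_{G'}(S)|\geq |N_G(S)|-1\geq |S|$, and induction applied to $G'$ completes the matching.

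In the \emph{tight case}, suppose there is a nonempty proper $S_0\subsetneq X$ with $|N(S_0)|=|S_0|$. Apply induction to the bipartite subgraph $G_1:=G[S_0,N(S_0)]$, which satisfies Hall's condition because any $S\subseteq S_0$ has $N_{G_1}(S)=N_G(S)$, yielding a matching $M_1$ saturating $S_0$. Then consider $G_2:=G-(S_0\cup N(S_0))$ with parts $X\setminus S_0$ and $Y\setminus N(S_0)$. To verify Hall's condition in $G_2$, take any $T\subseteq X\setminus S_0$; from $|N_G(S_0\cup T)|\geq |S_0|+|T|$ and $N_G(S_0\cup T)\subseteq N(S_0)\cup N_{G_2}(T)$ one gets $|N_{G_2}(T)|\geq |T|$. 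Induction on $G_2$ produces a matching $M_2$ saturating $X\setminus S_0$, and $M_1\cup M_2$ is the desired perfect matching of $X$.

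The main obstacle I anticipate is precisely the bookkeeping in the tight case, namely verifying cleanly that Hall's condition transfers to $G_2$; the inequality $|N_G(S_0\cup T)|\leq |N(S_0)|+|N_{G_2}(T)|$ is the crucial step and relies on the observation that every neighbor of $T$ outside $N(S_0)$ is a neighbor in $G_2$. Once this is in place, both subgraphs are strictly smaller than $G$ (because $S_0$ is a proper, nonempty subset of $X$), so the induction hypothesis applies and the proof is complete.
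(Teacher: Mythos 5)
Your proof is correct. Note that the paper itself offers no proof of this statement: it is Hall's classical marriage theorem, quoted from the reference [H] as a tool (Theorem 1.5) and used later via Corollary 1.6, so there is no argument in the paper to compare yours against. What you give is the standard Halmos--Vaughan induction on $|X|$: necessity by the pigeonhole observation, and sufficiency split into the case where Hall's condition is strict on all nonempty proper subsets (match an arbitrary edge $xy$ and delete it, losing at most one neighbour per set) and the tight case (split along a set $S_0$ with $|N(S_0)|=|S_0|$, apply induction separately to $G[S_0,N(S_0)]$ and to $G-(S_0\cup N(S_0))$, the key inequality $|N_G(S_0\cup T)|\le |N(S_0)|+|N_{G_2}(T)|$ transferring Hall's condition to the second piece). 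All the details you flag check out: in the easy case $N(x)\neq\emptyset$ by Hall's condition on $\{x\}$, every $S\subseteq X\setminus\{x\}$ is a proper subset of $X$ so the strict bound applies, and in the tight case both subproblems are strictly smaller since $S_0$ is nonempty and proper, while $M_1$ and $M_2$ are vertex-disjoint because $M_1$ lies entirely inside $S_0\cup N(S_0)$. So the argument is complete and would serve as a self-contained proof of the cited theorem.
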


\begin{corollary} A bipartite graph $G=G[X, Y]$ has a perfect matching
if and only if $|X|=|Y|$ and $|N(S)|\geq|S|$ for all $S\subseteq X$.
\end{corollary}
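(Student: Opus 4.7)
The plan is to derive Corollary 1.6 directly from Hall's theorem (Theorem 1.5), since the corollary simply adds the cardinality condition $|X|=|Y|$ to Hall's condition in order to upgrade ``saturates $X$'' to ``saturates $X\cup Y$''.

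For the necessity direction, I would assume $G$ has a perfect matching $M$. Every edge of $M$ has exactly one endpoint in $X$ and one in $Y$ (since $G$ is bipartite), and $M$ covers every vertex, so counting edge-endpoints in each part gives $|X|=|M|=|Y|$. Moreover $M$ saturates $X$, so by the ``only if'' direction of Theorem 1.5 we have $|N(S)|\geq|S|$ for all $S\subseteq X$.

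For the sufficiency direction, I would apply Theorem 1.5 in the other direction: from $|N(S)|\geq|S|$ for all $S\subseteq X$, Hall's theorem produces a matching $M$ saturating $X$, with $|M|=|X|$. Combined with the hypothesis $|X|=|Y|$, this forces $|M|=|Y|$, so $M$ also saturates $Y$ and is therefore a perfect matching of $G$. There is no genuine obstacle here; the work has already been done in Theorem 1.5, and the role of this corollary is only to record the equivalent form that will be convenient when invoking matching theory in later sections on bipartite digraphs.
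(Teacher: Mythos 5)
Your proof is correct and is exactly the standard derivation the paper intends: the paper states this corollary without proof as an immediate consequence of Hall's theorem (Theorem 1.5), and your two directions (counting matched endpoints to get $|X|=|Y|$, and using $|X|=|Y|$ to upgrade an $X$-saturating matching to a perfect matching) fill in that routine argument in the expected way.
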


In this paper, we give a complete characterization for a tournament
and a bipartite digraph admitting a
$\overrightarrow{P_{3}}$-decomposition.

\section {\large Preparations}

\subsection{\large Basic terminologies and notations}


Let $G=(V(G), E(G))$ be a graph. The {\it order} and {\it size} of
$G$ are $|V(G)|$ and $|E(G)|$, respectively. We say $G$ is trivial
if its order is 1, and nontrivial otherwise. If the size of $G$ is
0, then $G$ is called an {\it empty} graph. Let $X\subseteq V(G)$.
The \emph{induced subgraph} of $G$ induced by $X$, denoted by
$G[X]$, is the subgraph of $G$ whose vertex set is $X$ and whose
edge set consists of all edges of $G$ which have both ends in $X$.

Let $X, Y\subseteq V(G)$. We denote by $E[X,Y]$ the set of edges of
$G$ with one end in $X$ and the other end in $Y$. When
$Y=V(G)\setminus X$, the set $E[X,Y]$ is called the {\it edge cut}
of $G$ associated with $X$, and is denoted by $\partial(X)$. So, a
graph is connected if and only if $\partial(X)\neq \emptyset$ for
every nonempty proper subset $X$ of $V(G)$.

Let $D$ be a digraph. If $a=(u, v)$ is an arc, then $a$ is said to
join $u$ to $v$; we also say that $u$ \emph{dominates} $v$. The
vertex $u$ is the tail of $a$, and the vertex $v$ its head; they are
the two ends of $a$. The vertices which dominate a vertex $v$ are
its \emph{inneighbours}, those which are dominated by the vertex its
\emph{outneighbours}. These sets are denoted by $N^-_{D}(v)$ and
$N^+_{D}(v)$, respectively.

We say that $D$ is {\it strict} if there exists no loops or parallel
arcs (arcs with the same head and the same tail) in $D$. We say that
$D$ is {\it asymmetric} if $(u,v)\in A(D)$ for any two vertices $u,
v\in V(D)$, then $(v,u)\notin A(D)$. For a set $A'\subseteq A(D)$,
the \emph{arc-induced subgraph} $D[A']$ is the subdigraph of $D$
whose arc set is $A'$ and whose vertex set consists of all ends of
arcs in $A'$. We denote by $D\backslash A'$ the spanning subdigraph
of $D$ obtained from $D$ by deleting the arcs in $A'$.

Let $X, Y\subseteq V(D)$. We denote the set of arcs of $D$ whose
tails lie in $X$ and whose heads lie in $Y$ by $A(X, Y)$, and their
number by \emph{$a(X, Y)$}. This set of arcs is denoted by $A(X)$
when $Y=X$, and their number by $a(X)$. When $Y=V\setminus X$, the
set $A(X, Y)$ is called the \emph{outcut} of $D$ associated with
$X$, and denoted by $\partial^{+}(X)$. Analogously, the set $A(Y,X)$
is called the {\it incut} of $D$ associated with $X$, and denoted by
$\partial^{-}(X)$. Accordingly, we refer to $|\partial^{+}(X)|$ as
the {\it outdegree} of $X$ and denote it by $d^+(X)$. Similarly, we
refer to $|\partial^{-}(X)|$ as the {\it indegree} of $X$ and denote
it by $d^-(X)$. Observe that
$\partial^{+}(X)=\partial^{-}(V\backslash X)$. For simplicity, let
$\partial(X)=\partial^{+}(X)\cup
\partial^{-}(X)$. In particular, if $X=\{v\}$, then
$\partial^{-}(X)$, $\partial^{+}(X)$, $\partial(X)$, $d^-(X)$, and
$d^+(X)$ are simply denoted by $\partial^{-}(v)$, $\partial^{+}(v)$,
$\partial(v)$, $d^-(v)$, and $d^+(v)$, respectively. Since the
digraph $D$ under consideration is strict,  $d^+_D(v)=|N^+_D(v)|$
$d^-_D(v)=|N^-_D(v)|$ for a vertex $v\in V(D)$.

A digraph $D$ is called {\it weakly connected} if its underlying
undirected graph is connected. We say that $D$ is {\it strongly
connected or strong} if $\partial^+(X)\neq \emptyset$ for every
nonempty proper subset $X$ of $V(D)$.

\subsection{Connectedness of $L(D)$ for a digraph $D$}

A basic question is that for a digraph $D$, when is $L(D)$
connected. To answer this question, let us define a digraph $D'$
obtained from $D$ by repeatedly modified as follows: if there exists
such a vertex $v$ with $d^+_D(v)=0$ and $d^-_D(v)=k\geq 2$, split
$v$ into $k$ vertices $v_1, \cdots, v_k$ such that an arc with head
$v$ become an arc with head $v_i$ for some $i$; if there exists such
a vertex $u$ with $d^-_D(u)=0$ and $d^+_D(u)=l\geq 2$, split $u$
into $l$ vertices $u_1, \cdots, u_l$ such that an arc with tail $u$
become an arc with tail $u_j$ for some $j$. We repeat the operation
until no such vertex $v$ or $u$ with the property as described above
in the resulting new graph.

\begin{theorem} For a strict and asymmetric digraph $D$ without isolated vertices, $L(D)$ is connected if and only if $D'$ is weakly
connected.
\end{theorem}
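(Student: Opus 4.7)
The plan is to translate adjacency in $L(D)$ into a connectivity condition on the underlying graph of $D'$. Since the splitting operation does not change the arc set, there is a natural bijection between $A(D)$ and $A(D')$, which identifies the vertex set of $L(D)$ with $A(D')$. The rules for splitting act only on pure sources of out-degree $\geq 2$ and pure sinks of in-degree $\geq 2$, so every vertex of $D'$ either has both positive in- and out-degree (call such a vertex \emph{internal}) or is incident to exactly one arc (a \emph{leaf}). The key observation is that two arcs $a_1,a_2 \in A(D)$ are adjacent in $L(D)$ if and only if they meet at a common vertex $v \in V(D)$ with one arc entering $v$ and the other leaving, and such a $v$ necessarily has $d^+_D(v), d^-_D(v) \geq 1$; therefore $v$ is not split and persists in $D'$ as an internal vertex. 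Moreover, at any internal vertex $v$ of $D'$, all arcs incident to $v$ lie in the same component of $L(D)$: an in-arc and an out-arc at $v$ are directly adjacent, and two arcs of the same orientation at $v$ are each adjacent to any arc of the opposite orientation, which exists by internality.

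For the forward direction I argue the contrapositive. Assume $D'$ is not weakly connected, and partition $A(D') = A_1 \cup A_2$ with both parts nonempty and no vertex of $D'$ incident to arcs of both parts. Any edge of $L(D)$ would arise from two arcs sharing an internal vertex of $D'$, placing the two arcs in the same part; hence no edge of $L(D)$ joins $A_1$ to $A_2$, and $L(D)$ is disconnected.

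For the converse, suppose $D'$ is weakly connected and fix $a_1, a_2 \in A(D)$. Weak connectedness of $D'$ gives a walk $a_1 = e_1, e_2, \ldots, e_m = a_2$ in its underlying graph in which consecutive arcs $e_i, e_{i+1}$ share a vertex $u_i$ of $D'$. If $u_i$ is a leaf, only one arc of $D'$ meets $u_i$, forcing $e_i = e_{i+1}$; if $u_i$ is internal, the observation above places $e_i$ and $e_{i+1}$ in the same component of $L(D)$. Chaining through $i$ shows $a_1$ and $a_2$ lie in one component, so $L(D)$ is connected.

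The main obstacle is keeping track of the two notions of ``sharing a vertex'': two arcs can share an endpoint in $D$ without sharing one in $D'$ (namely when that endpoint is a split pure source or sink), and this is precisely the case in which the shared endpoint generates no adjacency in $L(D)$. Once the dichotomy internal/leaf in $V(D')$ is set up and this is recognized, both directions collapse to a short case analysis. A small preliminary check worth making is that the splitting procedure is well-defined and terminates independently of the order of operations; this holds because neither type of split produces a new vertex eligible for further splitting.
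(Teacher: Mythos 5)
Your proposal is correct and follows essentially the same route as the paper: identify $L(D)$ with $L(D')$, observe that every vertex of $D'$ is either a leaf or has both positive in- and out-degree, note that at such an ``internal'' vertex the incident arcs induce a connected (complete bipartite, by strictness and asymmetry) subgraph of $L(D')$, and transfer connectivity along a walk in the underlying graph of $D'$. Your leaf/internal dichotomy even handles the endpoints of the connecting walk slightly more cleanly than the paper's case analysis, but the argument is the same in substance.
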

\begin{proof} By the definition of $D'$, $L(D)=L(D')$ and
for any vertex $v\in V(D')$, $d^+_{D'}(v)=0$ and $d^-_{D'}(v)=1$, or
$d^-_{D'}(v)=0$ and $d^+_{D'}(v)=1$, or $d^+_{D'}(v)\geq 1$ and
$d^-_{D'}(v)\geq 1$.

First we prove the necessity by contradiction. Suppose that $D'$ is
not weakly connected. Let $D_1'$ be a nontrivial component of $D'$. Since $E[A(D_1'), A(D')\setminus A(D_1')]=\emptyset$ is an edge cut
of $L(D')$, $L(D')$ is disconnected, contradicting the assumption
that $L(D')$ is connected.

Next we assume that $D'$ is weakly connected. To show that $L(D')$
is connected, we take two arcs $a=(u,v)$ and $b=(x,y)$ in $D'$. It
suffices to show that there is a path joining $a$ and $b$ in
$L(D')$.

\vspace{2mm}\noindent{\bf Case 1.} $x=v$ or $y=u$

Then $a, b$ is a path joining $a$ and $b$ in $L(D')$.

\vspace{2mm}\noindent{\bf Case 2.} $x=u$ or $y=v$.

If $x=u$, then $d^+_{D'}(u)\geq 2$. By the definition of $D'$,
$d^-_{D'}(u)\geq 1$. Take an arc $c=(z,u)\in A(D')$, where $z\in
N^-_{D'}(u)$. So, $a, c, b$ is a path joining $a$ and $b$ in
$L(D')$.

If $y=v$, one may find a path joining $a$ and $b$ in $L(D')$ by the
similar way as above.

\vspace{2mm}\noindent{\bf Case 3.} $\{u,v\}\cap \{x, y\}=\emptyset$.

By our assumption, we choose a shortest path $P=u_1, \cdots, u_k$
connecting an end of $a$ to an end of $b$ in $D'$. Without loss of
generality, $u_1=u$ and $u_k=x$. It follows that $v\notin \{u_1,
\cdots, u_k\}$ and $y\notin \{u_1, \cdots, u_k\}$ and by the
definition of $D'$, $d^+_{D'}(u_i)\geq 1$ and $d^-_{D'}(u_i)\geq 1$
for any $i$.

Let $A(P)=\partial(\{u_1, \cdots, u_k\})$. Since $D$ is strict and
asymmetric, for each $i\in\{1, \ldots, k\}$, $L(D')[\partial(u_i)]$
is a complete bipartite graph, and $L(D')[\partial(u_i)]$ and $
L(D')[\partial(u_{i+1})]$ have a common vertex. It follows that
$L(D')[\partial(u_i)\cup
\partial(u_{i+1})]$ is connected and thus
$L(D')[\cup_{i=1}^{k}(\partial(u_i)]=L(D')[A(P)]$ is connected. In
particular, $a$ and $b$ is joined by a path in $L(D')[A(P)]$.

\end{proof}

The following corollary is a direct consequence of Theorem 2.1.

\begin{corollary} Let $T$ be a tournament of order $n\geq 3$.
Then $L(T)$ is disconnected if and only if $T$ has an arc
$(u, v)$ with $d^-_{T}(u)=0$ and $d^+_{T}(v)=0$.
\end{corollary}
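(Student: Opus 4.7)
My plan is to handle the two directions separately; the ``if'' direction admits a direct argument straight from the definition of $L(T)$, while for ``only if'' I would invoke Theorem 2.1.

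For the ``if'' direction, suppose $T$ has an arc $a=(u,v)$ with $d^-_T(u)=0$ and $d^+_T(v)=0$. I claim $a$ is an isolated vertex of $L(T)$: by Definition 1.1, any neighbour of $a$ in $L(T)$ would have to be an in-arc of $u$ (ruled out by $d^-_T(u)=0$) or an out-arc of $v$ (ruled out by $d^+_T(v)=0$). Since a tournament on $n\ge 3$ vertices has at least three arcs, $L(T)$ contains further vertices besides $a$, and so $L(T)$ is disconnected.

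For the ``only if'' direction, since $T$ is strict, asymmetric, and (as $n\ge 3$) has no isolated vertex, Theorem 2.1 lets me replace ``$L(T)$ disconnected'' by ``$T'$ is not weakly connected''. A tournament has at most one source and at most one sink, because between any two sources (resp.\ sinks) there would be an arc, forcing one of the pair to have positive in-degree (resp.\ out-degree); moreover, for $n\ge 3$ every source has $d^+_T=n-1\ge 2$ and every sink has $d^-_T=n-1\ge 2$, so the splitting in the construction of $T'$ is performed at every source and every sink, and only there. If $T$ has no sink (the no-source case is symmetric), then the only splitting is at a source $u$ (if one exists), and $T'$ is obtained from the tournament $T-u$ on $n-1\ge 2$ vertices by attaching pendant vertices $u_1,\ldots,u_{n-1}$, each joined to $T-u$ by a single arc; since $T-u$ is weakly connected, so is $T'$. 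Hence $T'$ not weakly connected forces the existence of both a source $u$ and a sink $v$ in $T$, and the unique tournament arc between them must be $(u,v)$, which is the arc required by the statement.

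The main point to watch is this last step of the forward direction: the claim that splitting only a single source or only a single sink cannot disconnect $T'$. It rests on the observation that $T-u$ (resp.\ $T-v$) remains a tournament on at least two vertices and so is weakly connected, after which only single-arc pendants are attached.
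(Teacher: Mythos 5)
Your proof is correct and follows essentially the same route as the paper: the ``if'' direction exhibits the arc $(u,v)$ as an isolated vertex of $L(T)$, and the ``only if'' direction invokes Theorem 2.1 after checking that $T'$ is weakly connected unless $T$ has both a source and a sink. Your case analysis simply fills in the step the paper leaves to the reader with ``one can see''.
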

\begin{proof} If $T$ has an arc
$(u, v)$ with $d^-_{T}(u)=0$ and $d^+_{T}(v)=0$, then $a=(u,v)$ is an
isolated vertex of $L(T)$. Since $n\geq 3$, the order of $L(T)$ is
at least 3, and thus $L(T)$ is disconnected.

Now we show its necessity. Observe that there exists at most one
vertex $u$ with $d^-_T(u)=0$ and at most one vertex $v$ with
$d^+_T(v)=0$. Let $T'$ be the digraph obtained from $T$ as defined
in the beginning of this section. As we saw before, $L(T)=L(T')$.
One can see that if $T$ has no arc $(u, v)$ with $d^-_{T}(u)=0$ and
$d^+_{T}(v)=0$, then $T'$ is weakly connected. By Theorem 2.1,
$L(T)$ is connected.
\end{proof}

The proof of the following result is easy, and is left to the
readers.

\begin{lemma} For a digraph $D$ without isolated vertices, $L(D)$ is
an empty graph if and only if $D$ is a bipartite digraph with
bipartition $X, Y$ such that $X=\{v\in V(D)|\ d^-_D(v)=0\}$ and
$Y=\{v\in V(D)|\ d^+_D(v)=0\}$.
\end{lemma}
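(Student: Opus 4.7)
The plan is to translate the condition ``$L(D)$ is empty'' into a local condition on vertices of $D$ and then verify both directions against the bipartite structure. By Definition~1.1, $L(D)$ is empty precisely when $D$ contains no directed path of length $2$, i.e.\ there are no two arcs of the form $(u,v),(v,w)$. Equivalently, every vertex $v\in V(D)$ satisfies $d^-_D(v)=0$ or $d^+_D(v)=0$, since otherwise choosing $u\in N^-_D(v)$ and $w\in N^+_D(v)$ would yield such a $\vec{P_3}$.

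For the necessity, assume $L(D)$ is empty. Set $X=\{v\in V(D)\mid d^-_D(v)=0\}$ and $Y=\{v\in V(D)\mid d^+_D(v)=0\}$. Since $D$ has no isolated vertex, no vertex belongs to both $X$ and $Y$, so $X\cap Y=\emptyset$; by the observation above $X\cup Y=V(D)$. It remains to show every arc of $D$ goes from $X$ to $Y$. If $(u,v)\in A(D)$, then $d^+_D(u)\geq 1$ forces $u\notin Y$, hence $u\in X$; similarly $d^-_D(v)\geq 1$ forces $v\in Y$. Thus $D$ is bipartite with the required bipartition.

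For the sufficiency, suppose $D$ is bipartite with bipartition $X,Y$ as described. Every arc $(u,v)$ must have its tail in $X$ (because $d^+_D(u)\geq 1$, so $u\notin Y$) and its head in $Y$. Consequently, no vertex is both the head of some arc and the tail of another, so $D$ contains no directed $P_3$, and $L(D)$ is empty.

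There is essentially no obstacle here: the whole argument hinges on the equivalence ``$L(D)$ empty $\Longleftrightarrow$ no $\vec{P_3}$ in $D$ $\Longleftrightarrow$ every vertex has indegree $0$ or outdegree $0$,'' and the hypothesis of no isolated vertices is exactly what prevents the trivial degenerate case $d^-_D(v)=d^+_D(v)=0$. The only mild care needed is to confirm $X$ and $Y$ as defined are disjoint and exhaust $V(D)$, which uses the no-isolated-vertex assumption.
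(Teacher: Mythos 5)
Your proof is correct and is the natural argument; the paper explicitly leaves this lemma's proof to the reader, and your reduction to the observation that $L(D)$ is empty iff every vertex has $d^-_D(v)=0$ or $d^+_D(v)=0$ (with the no-isolated-vertex hypothesis ensuring $X$ and $Y$ are disjoint) is surely the intended one.
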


%


An \emph{orientation} of a simple graph is referred to as an
oriented graph. A \emph{tournament} is an orientation of a complete
graph. A tournament is called {\it transitive} if it has no directed
cycles.

\begin{theorem} For any strict and asymmetric digraph $D$ of order $n$, $c(L(D))\leq f(n)$, where
$$
f(n)=\left \{
\begin{array}{ll}
\frac{n^{2}-1}{4}, & \mbox{if $n$ is odd\
}\\
\frac{n^{2}}{4}, &\mbox{if $n$ is even\ },
\end{array}
\right.
$$
with equality if and only if either $D$ is the transitive tournament
of order 3, or $D$ is the orientation of the balanced complete
bipartite graph $G=G[X,Y]$ of order $n$ with $A(D)=A(X, Y)$.

\end{theorem}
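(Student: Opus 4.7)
The plan is to bound $c(L(D))$ by separating isolated and non-isolated components of $L(D)$, then carefully count arcs via a partition of $V(D)$ according to degree type, and finally trace the inequalities back to pin down the equality cases.

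Let $S$ denote the number of isolated vertices of $L(D)$. By Definition 1.1, an arc $(u,v)$ is an isolated vertex of $L(D)$ precisely when $d^-_D(u)=0$ and $d^+_D(v)=0$. Because every non-isolated component of $L(D)$ contains at least two vertices, the number of non-isolated components is at most $(|A(D)|-S)/2$, and hence
$$c(L(D)) \;\le\; S + \frac{|A(D)|-S}{2} \;=\; \frac{|A(D)|+S}{2}.$$
Partition $V(D)$ into $A=\{v:d^-(v)=0,\,d^+(v)\ge 1\}$, $B=\{v:d^+(v)=0,\,d^-(v)\ge 1\}$, $Z=\{v:d^-(v)\ge 1,\,d^+(v)\ge 1\}$ and $I=\{v:d^-(v)=d^+(v)=0\}$, with cardinalities $a,b,z,i$ summing to $n$. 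Every arc of $D$ has its tail in $A\cup Z$ and its head in $B\cup Z$, and $S=a(A,B)$. Since $D$ is strict and asymmetric,
$$|A(D)|+S \;=\; 2a(A,B)+a(A,Z)+a(Z,B)+a(Z,Z) \;\le\; 2ab+(a+b)z+\frac{z(z-1)}{2}.$$
Combining with $ab\le\lfloor(a+b)^2/4\rfloor$ and $a+b+z\le n$, a direct parity-split computation on $s=a+b$ and $z$ yields $|A(D)|+S \le 2f(n)$, and hence $c(L(D)) \le f(n)$.

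The main obstacle is the equality analysis. Equality in $c(L(D))=f(n)$ forces all of the following simultaneously: $i=0$; the arc-count bounds $a(A,B)=ab$, $a(A,Z)=az$, $a(Z,B)=bz$, $a(Z,Z)=z(z-1)/2$; the product identity $ab=\lfloor(a+b)^2/4\rfloor$; and every non-isolated component of $L(D)$ is exactly $K_2$. For $n$ even this pins down $z=0$ and $a=b=n/2$ with every arc from $A$ to $B$ present, i.e.\ the balanced bipartite orientation. For $n$ odd the parity analysis leaves exactly two maximizing configurations: (i) $z=0$ with $\{a,b\}=\{(n-1)/2,(n+1)/2\}$ and $a(A,B)=ab$, which is again the balanced bipartite orientation; or (ii) $z=1$ with $a=b=(n-1)/2$ and every possible arc among $A$, the single vertex $w\in Z$, and $B$ present. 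In case (ii) the non-isolated arcs of $L(D)$ are $\{(u,w):u\in A\}\cup\{(w,v):v\in B\}$ and induce a copy of $K_{a,b}$ in $L(D)$, which is a single connected subgraph; the $K_2$-component condition then forces $a=b=1$, so $n=3$ and $D$ is the transitive tournament on three vertices. All remaining configurations (with $i\ge 1$, or $z\ge 2$, or $s=a+b$ violating the required parity) strictly decrease $|A(D)|+S$ below $2f(n)$ and are therefore excluded.
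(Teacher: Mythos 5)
Your proof is correct, but it takes a genuinely different route from the paper's. The paper picks one arc from each component of $L(D)$, observes that these form an independent set $A^*$ of $L(D)$ with $|A^*|=c(L(D))$, and then invokes its Lemma 2.3: since $L(D[A^*])$ is empty, the arcs of $A^*$ all run from a set $X^*$ of tails to a disjoint set $Y^*$ of heads, whence $c(L(D))=|A^*|\le |X^*||Y^*|\le \lfloor n^2/4\rfloor=f(n)$ immediately, with equality forcing $A^*$ to be a full balanced bipartite arc set; a short case split on $\min\{|X^*|,|Y^*|\}$ then recovers $D$ itself. You instead bound $c(L(D))\le (|A(D)|+i(L(D)))/2$ and control the right-hand side by partitioning $V(D)$ into sources, sinks, ``internal'' vertices and isolated vertices, using $2ab+(a+b)z+\binom{z}{2}\le \frac{(s+z)^2-z}{2}$. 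I checked your parity computation and your equality analysis (including the $z=1$ case, where the non-isolated arcs induce a connected $K_{a,b}$ that must reduce to $K_2$, forcing the transitive tournament on $3$ vertices); they go through. The paper's argument is shorter for the upper bound because it never needs to count all arcs of $D$, only one representative per component; your argument buys a more transparent and, frankly, more carefully justified treatment of the equality cases, which in the paper are dispatched rather tersely (its ``Case 2'' in particular). One small omission on your side, shared with the paper: you should state explicitly that the two extremal families do achieve $c(L(D))=f(n)$, though this verification is immediate.
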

\begin{proof} Let $G_1, \cdots, G_k$ be all components of $L(D)$.
We take a vertex $a_i\in V(G_i)$ for each $i\in\{1, \cdots, k\}$.
Let $A^*=\{a_1, \cdots, a_k \}$ be an independent set of $L(D)$. By
Lemma 2.3, $D^*=(V(D), A^*)$ is a bipartite digraph with a
bipartition $X^*, Y^*$ such that $A_{D^*}(X^*,Y^*)=A^*$. Since $D$
is strict and asymmetric, we have $|A^*|\leq |X^*||Y^*|$,
$|X^*|+|Y^*|\leq n$, and thus  $|A^*|\leq f(n)$, with equality if
and only if
\begin{equation} A^*=\{(x,y)|\ \forall x\in X^*, \forall y\in Y^*)\}\ and
\ ||X^*|-|Y^*||\leq 1.
\end{equation}

So, $c(L(D))=|A^*|\leq f(n)$.

Now assume that $c(L(D))=f(n)$. Then $|A^*|=f(n)$ and
$$A^*=\{(x,y)|\ \forall x\in X^*, \forall y\in Y^*)\} \ and\
||X^*|-|Y^*||\leq 1.$$ Therefore, $D^*=(V(D), A^*)$ is the
orientation of the balanced complete bipartite graph $G=G[X^*,Y^*]$
of order $n$ with $A(D^*)=A(X^*, Y^*)$.

\vspace{2mm}\noindent{\bf Case 1.} $\min\{|X_1^*|, |Y_1^*|\}\geq 2$

We show that $D=D^*$. Suppose that $D\neq D^*$. Since $D$ is a
strict and asymmetric digraph, there exists an arc $a$ with both its
head and tail lying in $X^*$ or $Y^*$. Without loss of generality,
let $a=(x_1, x_2)$ be such an arc. Since $|Y^*|\geq 2$, then both
$(x_2, y_1)$ and $(x_2, y_2)$ are adjacent to $(x_1,x_2)$,
contradicting the fact that $(x_2, y_1)$ and $(x_2, y_2)$ lie in the
different components of $L(D)$. This proves $D=D^*$.

\vspace{2mm}\noindent{\bf Case 2.} $\min\{|X_1^*|, |Y_1^*|\}=1$

Without loss of generality, let $|X_1^*|=1$. If $|Y_1^*|=1$, there
is nothing to be proved. If $|Y_1^*|=2$, then either $D=D^*$ or $D$
is the transitive tournament of order 3.

Conversely, assume that $D$ is the transitive tournament of order 3,
or $D$ is the orientation of the balanced complete bipartite graph
$G=G[X,Y]$ of order $n$ with $A(D)=A(X, Y)$. It is straightforward
to check that $c(L(D))=f(n)$.

\end{proof}

\section {\large Tournament}

\begin{theorem}
Let $D$ be a digraph. Then $L(D)$ has a fractional perfect matching
if and only if $$a(X, Y)\leq a(Y, X)+a(X)+a(Y)+a(Z, X)+a(Y, Z)$$ for
any partition $X, Y, Z$ of $V(D)$.
\end{theorem}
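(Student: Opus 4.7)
The plan is to translate Tutte's fractional matching criterion (Theorem 1.4) applied to $L(D)$ into the stated arc inequality by setting up a precise correspondence between subsets $S \subseteq A(D) = V(L(D))$ and tripartitions $(X,Y,Z)$ of $V(D)$. Recall that the neighbors of an arc $a=(u,v)$ in $L(D)$ are exactly the arcs entering $u$ and the arcs leaving $v$, so $a$ is isolated in $L(D)-S$ precisely when $a \notin S$ and every in-arc of $u$ and every out-arc of $v$ lies in $S$.

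For the necessity direction, given any partition $(X,Y,Z)$, I would set
\[
S = A(Y,X) \cup A(X) \cup A(Y) \cup A(Z,X) \cup A(Y,Z),
\]
a disjoint union, so $|S| = a(Y,X)+a(X)+a(Y)+a(Z,X)+a(Y,Z)$. A short check shows that for any arc $(u,v) \in A(X,Y)$, every in-arc of $u$ is automatically in $A(X) \cup A(Y,X) \cup A(Z,X) \subseteq S$ and every out-arc of $v$ is in $A(Y) \cup A(Y,X) \cup A(Y,Z) \subseteq S$. Thus $A(X,Y)$ consists entirely of isolated vertices of $L(D)-S$, giving $a(X,Y) \le i(L(D)-S) \le |S|$ by Theorem 1.4.

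For the sufficiency direction, given $S \subseteq A(D)$, let $I$ be the set of isolated vertices of $L(D)-S$, and define
\[
X = \{\,u \in V(D) : (u,v) \in I \text{ for some } v\,\}, \quad
Y = \{\,v \in V(D) : (u,v) \in I \text{ for some } u\,\},
\]
with $Z = V(D) \setminus (X \cup Y)$. The first thing to verify is that $X \cap Y = \emptyset$: if $w \in X \cap Y$, pick $(w,v),(u,w) \in I$; then $(u,w) \in I$ forces every out-arc of $w$ into $S$, contradicting $(w,v) \in I \subseteq A(D)\setminus S$. So $(X,Y,Z)$ is a genuine tripartition, and clearly $I \subseteq A(X,Y)$. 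Next I would verify that each of $A(Y,X), A(X), A(Y), A(Z,X), A(Y,Z)$ is contained in $S$, by a short case analysis using the definitions of $X,Y$ (e.g.\ any arc with head in $X$ must lie in $S$ since the head is the tail of some isolated arc). Summing the five disjoint contributions and combining with the hypothesis yields
\[
i(L(D)-S) = |I| \le a(X,Y) \le a(Y,X)+a(X)+a(Y)+a(Z,X)+a(Y,Z) \le |S|,
\]
so Theorem 1.4 produces the desired fractional perfect matching.

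The only non-mechanical step is finding the correct definition of $(X,Y,Z)$ from $S$ and confirming $X \cap Y = \emptyset$; everything else is bookkeeping of arc-classes between the three parts. No further obstacle arises because the five types of arcs appearing in the inequality are precisely the arcs forced into $S$ by the isolation condition, so the correspondence is tight.
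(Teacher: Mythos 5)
Your proof is correct and takes essentially the same route as the paper: the same witness set $S=A(Y,X)\cup A(X)\cup A(Y)\cup A(Z,X)\cup A(Y,Z)$ for necessity, and the same tripartition built from the tails and heads of the isolated arcs of $L(D)-S$ for sufficiency. The only cosmetic differences are that the paper argues the sufficiency by contradiction and cites its Lemma 2.3 where you verify $X\cap Y=\emptyset$ and the containments in $S$ directly.
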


\begin{proof} To show its necessity, we assume that $L(D)$ has a fractional perfect
matching. For a partition $X, Y, Z$ of $V(D)$, let $S=A(Y,
X)\cup A(X)\cup A(Y)\cup A(Z, X)\cup A(Y, Z)$. Note that $i(L(D)-S)\geq a(X, Y)$. By
Theorem 1.4 we have  $$a(X,Y)\leq i(L(D)-S)\leq |S|=a(Y,
X)+a(X)+a(Y)+a(Z, X)+a(Y, Z).
$$

We prove the sufficiency by contradiction. Suppose that $L(D)$ has
no fractional perfect matching. By Theorem 1.4, there exists a
subset $S\subseteq V(L(D))$ such that $i(L(D)-S)>|S|$. Let $I$ be
the set of isolated vertices in $L(D)-S$. Then $|I|=i(L(D)-S)$.
Since $I$ is an independent set of $L(D)$, by Lemma 2.3 $D[I]$ is a
bipartite digraph such that $I\subseteq A(X, Y)$, where $X$ is the
set of tails of arcs in $I$ and $Y$ is the set of heads of arcs in
$I$. Let $Z=V(D)\setminus (X\cup Y)$.

Moreover, $A(Y, X)\cup A(X)\cup A(Y)\cup A(Z, X)\cup A(Y,
Z)\subseteq S$. Combining the above, we have $$a(X,Y)\geq i(L(D)-S)>
|S|\geq a(Y, X)+a(X)+a(Y)+a(Z, X)+a(Y, Z),$$ a contradiction.

\end{proof}

Diwan \cite{D} asked the characterization of tournaments with
$\overrightarrow{P_3}$-decomposition. Indeed, we show that for a
tournament $T$, $L(T)$ has a perfect matching if and only if it has
a fractional perfect matching.

\begin{theorem}
A tournament $T$ of an even size has a
$\overrightarrow{P_{3}}$-decomposition if and only if $$a(X, Y)\leq
a(Y, X)+a(X)+a(Y)+a(Z, X)+a(Y, Z)$$ for any partition $X, Y, Z$ of
$V(T)$.
\end{theorem}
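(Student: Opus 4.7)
The plan is to use Proposition 1.2 to restate the statement as a perfect matching problem in $L(T)$, and then combine Theorem 3.1 with an upgrading argument from fractional perfect matchings to genuine perfect matchings.

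Necessity is immediate: if $T$ admits a $\overrightarrow{P_{3}}$-decomposition, then $L(T)$ has a perfect matching, which is in particular a fractional perfect matching, so the necessity direction of Theorem 3.1 yields the displayed inequality for every partition $X, Y, Z$ of $V(T)$.

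For sufficiency, suppose the inequality holds for every partition. By Theorem 3.1 we obtain a fractional perfect matching $H$ of $L(T)$, i.e.\ a spanning subgraph each of whose components is either a $K_{2}$ or a cycle. The $K_{2}$-components and the even-cycle components already contribute a perfect matching on their vertex sets (alternate edges along each even cycle). Because $|V(L(T))| = |A(T)|$ is even by hypothesis, the total number of vertices lying on odd-cycle components of $H$ is even, so the odd cycles themselves come in an even number and can be grouped into pairs.

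The core step, and the place I expect the main difficulty, is showing that any pair of odd-cycle components $C_{1}, C_{2}$ of $H$ can be eliminated: locate an edge $e$ of $L(T)$ with one end in $V(C_{1})$ and the other in $V(C_{2})$, and reroute $H$ along $e$ so that $C_{1} \cup C_{2}$ is replaced by components whose orders are all even. Iterating this move drives the number of odd cycles to zero, after which the resulting fractional perfect matching is an honest perfect matching. The existence of the connecting edge is where the tournament hypothesis is decisive: vertices of $L(T)$ are arcs of $T$, two arcs are adjacent in $L(T)$ precisely when they form a directed $P_{3}$, and the completeness of $T$ produces many such incidences among the arcs spanning $V(C_{1}) \cup V(C_{2})$. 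I would carry out a short case analysis on the vertex sets of $T$ touched by $C_{1}$ and $C_{2}$, using that any two distinct vertices of $T$ are joined by an arc in exactly one direction.

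As a backup in case the chord-finding becomes cumbersome, one may instead verify Tutte's criterion $c_{o}(L(T)-S) \leq |S|$ for all $S \subseteq A(T)$ directly: Theorem 3.1 already gives $i(L(T)-S) \leq |S|$, and the additional density of a tournament should force every non-trivial odd component of $L(T)-S$ to consume at least one extra arc of $S$ beyond what a bare isolated vertex would demand, thereby recovering Tutte's inequality from the fractional one.
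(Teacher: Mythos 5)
Your necessity argument is exactly the paper's and is fine. The sufficiency direction, however, has a genuine gap at precisely the step you flag as "the core step." A fractional perfect matching does not in general upgrade to a perfect matching (take $K_3\cup K_3$: it has a fractional perfect matching but no perfect matching), so everything rests on finding an edge of $L(T)$ joining two odd-cycle components $C_1,C_2$ of $H$ --- and such an edge need not exist. Two arcs of $T$ are adjacent in $L(T)$ only if they share an endpoint of $T$; so if $C_1$ and $C_2$ are, say, the triangles of $L(T)$ arising from two directed triangles of $T$ on disjoint vertex triples, there is no edge of $L(T)$ between $V(C_1)$ and $V(C_2)$ at all. The completeness of $T$ does supply many arcs between the two vertex triples, but those arcs are \emph{other vertices} of $L(T)$, not edges joining $C_1$ to $C_2$; they are of no use for a one-edge rerouting. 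To repair this you would have to route alternating paths through the $K_2$- and cycle-components of $H$ lying between $C_1$ and $C_2$, at which point the parity bookkeeping is the whole difficulty and your even-size hypothesis is being used only in the weak form "the odd cycles come in pairs," which is not where the paper needs it.

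The paper avoids fractional matchings in the sufficiency direction altogether and argues via Tutte's theorem with an extremal violating set $S$ (maximizing $c_o(L(T)-S)-|S|$, then $|S|$). The decisive observation is that each nontrivial odd component $H_i$ of $L(T)-S$ contains the line graph of a directed path of length $2$ whose middle vertex $z_i$ lies in $Z$, and all $\binom{k}{2}$ arcs of $T$ among $z_1,\dots,z_k$ must lie in $S\setminus S_1$; combined with $|S_1|\geq c_o(L(T)-S)-k$ this kills $k\geq 3$, while $k=2$ is eliminated by the parity of $|A(T)|$ (this is where the even-size hypothesis really enters) and $k=1$ by the partition inequality with slack $2$. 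Your backup suggestion --- "each nontrivial odd component consumes one extra arc of $S$" --- points in the right direction but is not literally true in the form needed: for $k=2$ the count yields only one extra arc for two components, and without the parity argument Tutte's inequality does not follow. So both of your routes, as stated, are missing the actual counting and parity arguments that make the theorem work.
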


\begin{proof} If $T$ has a $\overrightarrow{P_{3}}$-decomposition, then
$L(T)$ has a perfect matching. Thus, $L(T)$ has a fractional perfect
matching. By Theorem 3.1, the result follows.

We next prove the sufficiency. In order to do so, it suffices to show that $L(T)$ has a perfect matching. Suppose that $L(T)$ has no perfect matching. By Theorem 1.3, there exists a subset $S\subseteq L(T)$ such that $c_o(L(T)-S)>|S|$. We choose the subset $S$ with two additional properties: (1) $c_o(L(T)-S)-|S|$ is maximum and (2)
subject to (1), $|S|$ is as large as possible.

\vspace{2mm} \noindent{\bf Claim 1.} Each component of $L(T)-S$ is
odd.

Suppose that $L(T)-S$ has an even component $C$. Take a vertex $v$
in $C$. Let $S'=S\cup \{v\}$. It is clear that $c_o(L(T)-S')\geq
c_o(L(T)-S)+1$, and thus $c_o(L(T)-S^{'})-|S'|\geq
c_o(L(T)-S)+1-|S|-1=c_o(L(T)-S)-|S|$, contradicting the choice (2)
of $S$. This proves Claim 1.

\vspace{2mm} Let $I$ be the set of isolated vertices in $L(T)-S$. As
defined in the proof of Theorem 3.1, $T[I]$ is a bipartite digraph
such that $I\subseteq A(X, Y)$, where $X$ is the set of tails of
arcs in $I$ and $Y$ is the set of heads of arcs in $I$. Let
$Z=V(T)\setminus (X\cup Y)$. By the choice of $S$, $I=A(X, Y)$ and
$A(Y, X)\cup A(X)\cup A(Y)\cup A(Z, X)\cup A(Y, Z)\subseteq S$.

Let $H_1, \cdots, H_k$ be all nontrivial components of $L(T)-S$. By
Claim 1, they are odd. So, $c_o(L(T)-S)=i(L(T)-S)+k=a(X,Y)+k$.

For convenience, let $S_1=A(Y, X)\cup A(X)\cup A(Y)\cup A(Z, X)\cup
A(Y, Z)$ and $S_2=S\setminus S_1$. 
Let $T'=T\setminus S_1$, as shown in Figure 1. Note that
$L(T)-S=L(T')-S_2$.

\begin{center}
\scalebox{0.4}{\includegraphics{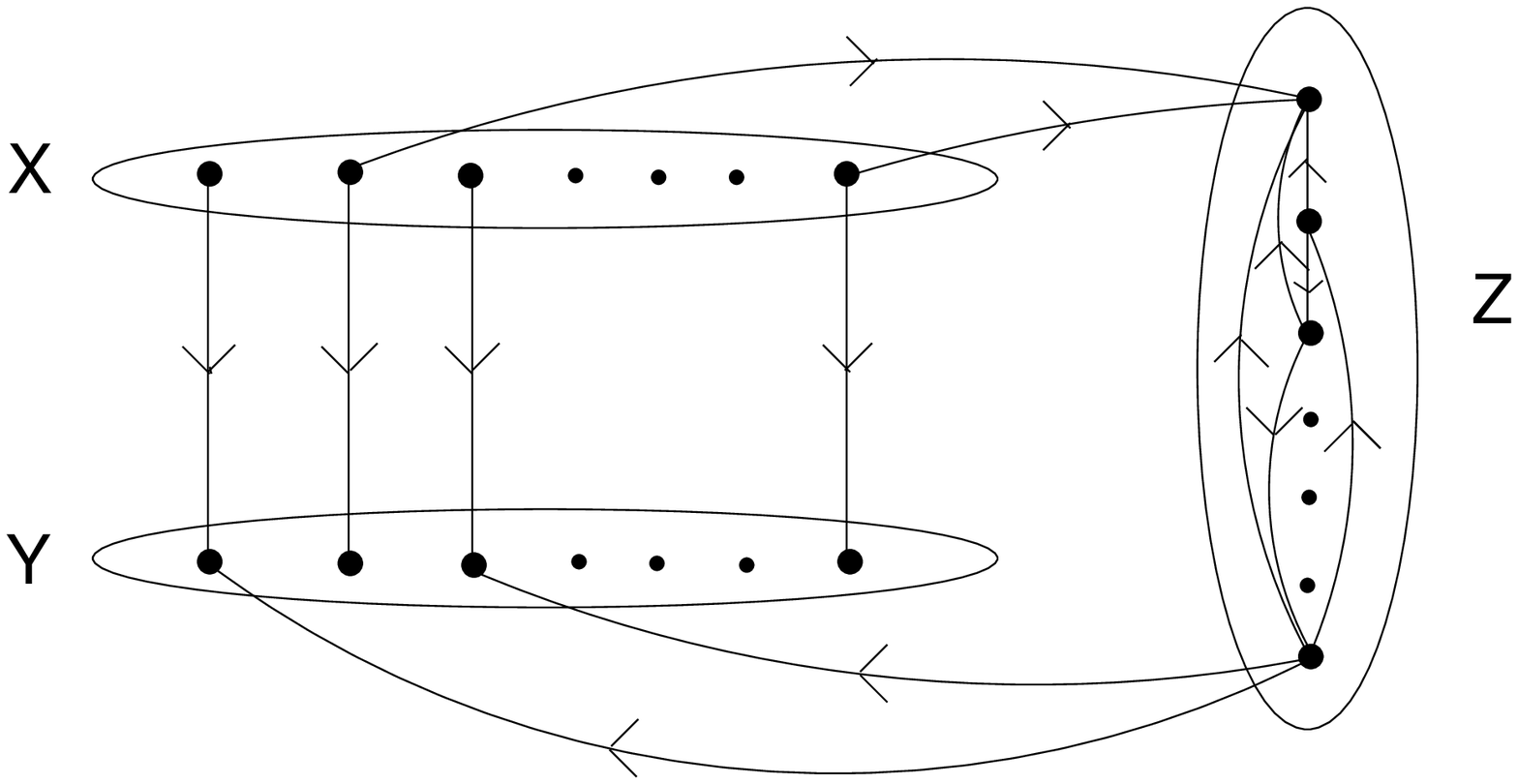}}\\
\vspace{0.4cm} Fig. 1. $T'=T\backslash S_1$
\end{center}

Since for each $i\in\{1, \ldots, k\}$, $H_i$ is nontrivial, there is
a directed path $P_i$ of length 2 in $T'$ such that $L(P_i)\subseteq
H_i$. Let $z_i$ be the central vertex of $P_i$. Clearly, $z_i\in Z$
with $d^+_{T'}(z_i)>0$ and $d^-_{T'}(z_i)>0$. Note that
$\partial(z_i)\subseteq V(H_i)$ and $\partial(z_i)\cap
V(H_j)=\emptyset$ when $i\neq j$. It follows that the arc connecting
$z_i$ and $z_j$ belongs to $S_2$ and thus $|S_2|\geq \big (^k_2
\big)$.

We consider following four cases.

\vspace{2mm} \noindent{\bf Case 1.} $k\geq 3$

\vspace{2mm} So, $|S|=|S_{1}|+|S_{2}|\geq c_o(L(T)-S)-k+\big (^k_2
\big)\geq c_o(L(T)-S)$, contradicting $c_o(L(T)-S)>|S|$.

\vspace{2mm} \noindent{\bf Case 2.} $k=2$

So, $|S|=|S_{1}|+|S_{2}|\geq c_o(L(T)-S)-2+1=c_o(L(T)-S)-1$. Since
$c_o(L(T)-S)-1\geq |S|$, $c_o(L(T)-S)-1=|S|$,  which contradicts the
fact that $|S|$ and $c_o(L(T)-S)$ have the same parity.

\vspace{2mm} \noindent{\bf Case 3.} $k=1$

\vspace{2mm} Since $c_o(L(T)-S)=i(L(T)-S)+1=a(X,Y)+1$,
$|S|=|S_1|+|S_2|\geq a(Y, X)+a(X)+a(Y)+a(Z, X)+a(Y, Z)$ and
$c_o(L(T)-S)\geq |S|+2$, we have $a(X,Y)\geq a(Y, X)+a(X)+a(Y)+a(Z,
X)+a(Y, Z)+1$, contradicting our assumption that $a(X, Y)\leq a(Y,
X)+a(X)+a(Y)+a(Z, X)+a(Y, Z)$.

\vspace{2mm} \noindent{\bf Case 4.} $k=0$

\vspace{2mm} It means that each component of $L(T)-S$ is singleton.
We have $c_o(L(T)-S)=i(L(T)-S)=a(X, Y)\leq a(Y, X)+a(X)+a(Y)+a(Z,
X)+a(Y, Z)=|S|$, contradicting that $c_o(L(T)-S)>|S|$.
\end{proof}

\begin{corollary} Let $T$ be a tournament of order $n\geq 3$.
If $T$ has an arc $(u, v)$ with $d^-_{T}(u)=0$ and $d^+_{T}(v)=0$,
then $T$ has no $\overrightarrow{P_3}$-decomposition.
\end{corollary}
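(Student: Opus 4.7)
My plan is to argue directly that the arc $(u,v)$ becomes an isolated vertex in $L(T)$, which immediately prevents a perfect matching and hence, by Proposition 1.2, prevents a $\overrightarrow{P_3}$-decomposition. So first I would unpack Definition 1.1: two arcs of $T$ are adjacent in $L(T)$ iff they form a directed path of length 2. Any arc adjacent to $a=(u,v)$ in $L(T)$ must therefore have the form $(w,u)$ for some $w\in N^-_T(u)$ or $(v,x)$ for some $x\in N^+_T(v)$. The hypotheses $d^-_T(u)=0$ and $d^+_T(v)=0$ rule out both, so $a$ is isolated in $L(T)$.

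Next I would observe that because $T$ is a tournament on $n\geq 3$ vertices, it has $\binom{n}{2}\geq 3$ arcs, so $|V(L(T))|\geq 3$. An isolated vertex in a graph with at least two other vertices cannot belong to any perfect matching, so $L(T)$ has no perfect matching. Proposition 1.2 then finishes the proof.

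As a sanity check (and an alternative route), I would verify the statement via Theorem 3.2 using the partition $X=\{u\}$, $Y=\{v\}$, $Z=V(T)\setminus\{u,v\}$. One has $a(X,Y)=1$ since $(u,v)\in A(T)$, while each of $a(Y,X)$, $a(X)$, $a(Y)$, $a(Z,X)$, $a(Y,Z)$ vanishes: the first because $T$ is asymmetric (and $v$ has out-degree $0$ anyway), the next two because $|X|=|Y|=1$, the fourth because $d^-_T(u)=0$, and the fifth because $d^+_T(v)=0$. Hence the inequality of Theorem 3.2 fails, confirming the conclusion when $T$ has even size; when $T$ has odd size there is nothing to prove since a $\overrightarrow{P_3}$-decomposition forces an even number of arcs.

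There is no real obstacle here: the only step requiring care is noting that $n\geq 3$ is used precisely to guarantee $|V(L(T))|\geq 3$ so that the isolated vertex obstruction is not vacuous (in a $2$-vertex graph an isolated vertex might coexist with another isolated vertex, but then the ``matching'' would be empty, which is not perfect unless the graph itself is empty). The direct argument via isolation in $L(T)$ is cleaner and I would present it as the proof.
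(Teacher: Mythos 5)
Your proposal is correct, and your primary argument is genuinely different from (and more elementary than) the paper's. The paper proves this corollary by invoking Theorem 3.2 with the partition $X=\{u\}$, $Y=\{v\}$, $Z=V(T)\setminus\{u,v\}$ and checking that $a(X,Y)=1$ while the right-hand side vanishes --- exactly your ``sanity check.'' Your main route instead observes directly that $(u,v)$ is an isolated vertex of $L(T)$ (this is the same observation the paper itself makes in the proof of Corollary 2.2, though there it is used only to conclude disconnectedness), so $L(T)$ has no perfect matching and Proposition 1.2 finishes. This bypasses the Tutte-type machinery behind Theorem 3.2 entirely and is arguably cleaner; it also sidesteps a small imprecision in the paper, since Theorem 3.2 is stated only for tournaments of even size, a caveat you explicitly handle and the paper's proof does not. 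One minor quibble: your closing remark overcomplicates the role of $n\geq 3$. A perfect matching must cover \emph{every} vertex, so any graph containing an isolated vertex fails to have one, regardless of how many other vertices there are; you do not actually need $|V(L(T))|\geq 3$ for the obstruction to bite. This does not affect the validity of your proof.
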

\begin{proof} Let us consider the partition $X=\{u\}$, $Y=\{v\}$, $Z=V(T)\setminus \{u,
v\}$ of $V(T)$. Note that $a(X,Y)=1$,
$a(Y,X)=a(X)=a(Y)=a(Z,X)=a(Y,Z)=0$. So, $a(X,Y)\leq a(Y,
X)+a(X)+a(Y)+a(Z, X)+a(Y, Z)$ fails. By Theorem 3.2, $T$ has no
$\overrightarrow{P_3}$-decomposition.

\end{proof}

\begin{corollary} Let $T$ be a tournament of order $n\geq 3$.
If $V(T)$ has a partition $X, Y$ such that $||X|-|Y||\leq 1$ and
$A(X,Y)=\{(x,y)|\ \forall x\in X, \forall y\in Y\}$, then $T$ has no
$\overrightarrow{P_3}$-decomposition.
\end{corollary}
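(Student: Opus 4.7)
The plan is to apply Theorem 3.2 to the natural partition $(X',Y',Z')=(X,Y,\emptyset)$ of $V(T)$ and show that the inequality there fails, so $T$ admits no $\overrightarrow{P_{3}}$-decomposition. Before doing so, I would dispose of the trivial case that $T$ has odd size: each directed path of length $2$ contains exactly two arcs, so a $\overrightarrow{P_{3}}$-decomposition requires an even number of arcs.

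Assuming now that $T$ has even size, I would compute each term in the inequality of Theorem 3.2 for the partition above. By hypothesis every arc between $X$ and $Y$ points from $X$ to $Y$, giving $a(X',Y')=|X||Y|$ and $a(Y',X')=0$. Because $T$ is a tournament, its restrictions to $X$ and to $Y$ are themselves tournaments, so $a(X')=\binom{|X|}{2}$ and $a(Y')=\binom{|Y|}{2}$. The remaining terms $a(Z',X')$ and $a(Y',Z')$ vanish since $Z'=\emptyset$.

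Thus the inequality from Theorem 3.2 reduces to $|X||Y|\le\binom{|X|}{2}+\binom{|Y|}{2}$, and a brief algebraic manipulation (using $|X|+|Y|=n$) rewrites this as $n\le(|X|-|Y|)^{2}$. The hypothesis $||X|-|Y||\le 1$ forces $(|X|-|Y|)^{2}\le 1$, whereas $n\ge 3$, so the inequality fails. Theorem 3.2 then yields the conclusion.

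I do not foresee any real obstacle: this corollary is essentially designed as a direct application of Theorem 3.2 with the witness partition staring us in the face. All the work lies in picking $(X,Y,\emptyset)$ and verifying one line of algebra; the even/odd split is a cosmetic point needed only because Theorem 3.2 is stated for tournaments of even size.
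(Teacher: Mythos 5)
Your proof is correct and follows essentially the same route as the paper: apply the necessity direction of Theorem 3.2 to the partition $(X,Y,\emptyset)$, compute $a(X,Y)=|X||Y|$, $a(X)+a(Y)=\binom{|X|}{2}+\binom{|Y|}{2}$, and observe the inequality fails when $||X|-|Y||\le 1$ and $n\ge 3$. Your algebraic reformulation $n\le(|X|-|Y|)^2$ is a slightly cleaner packaging of the paper's parity case analysis, and your explicit handling of the odd-size case is a small point of care the paper omits.
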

\begin{proof} Let $Z=\emptyset$ and we consider the partition $X$, $Y$, $Z$. Note that
$a(X,Y)=f(n)$, as defined in the statement of Theorem 2.4, where
$$
f(n)=\left \{
\begin{array}{ll}
\frac{n^{2}-1}{4}, & \mbox{if $n$ is odd\
}\\
\frac{n^{2}}{4}, &\mbox{if $n$ is even\ },
\end{array}
\right.
$$

and $a(Y,X)=a(Z, X)=a(Y, Z)=0$ and

$$
a(X)+a(Y)=\left \{
\begin{array}{ll}
\frac{(n-1)^2}{4}, & \mbox{if $n$ is odd\
}\\
\frac{n^{2}}{4}-\frac n 2, &\mbox{if $n$ is even\ },
\end{array}
\right.
$$

Again $a(X,Y)\leq a(Y, X)+a(X)+a(Y)+a(Z, X)+a(Y, Z)$ fails. By
Theorem 3.2, $T$ has no $\overrightarrow{P_3}$-decomposition.

\end{proof}

\section {\large Bipartite digraph}
A graph (digraph) is bipartite if its vertex set can be partitioned
into two subsets $X$ and $Y$ so that every edge (arc) has one end in
$X$ and one end in $Y$. In this section, we give a characterization
for a bipartite digraph with $\overrightarrow{P_{3}}$-decomposition.

\begin{theorem} A bipartite digraph $D=D[X, Y]$ has a
$\overrightarrow{P_{3}}$-decomposition if and only if
$$d^{+}(X)=d^{-}(X)$$ and $$a(X_{1}, Y_{1})+a(Y_{1},X_{1})\leq
d^{+}(Y_{1})+d^{-}(X_{1})$$ for any $X_{1}\subseteq X, Y_{1}\subseteq Y$.
\end{theorem}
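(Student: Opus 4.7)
My plan is to reduce the statement to Hall's condition applied to $L(D)$, exploiting the fact that in this bipartite setting $L(D)$ is itself bipartite. Indeed, since $D=D[X,Y]$ is bipartite, any directed path of length two in $D$ alternates between the two parts and therefore consists of exactly one arc of $A(X,Y)$ and one arc of $A(Y,X)$. Hence $L(D)$ is bipartite with bipartition $\bigl(A(X,Y),A(Y,X)\bigr)$. By Proposition 1.2 and Corollary 1.6, a $\overrightarrow{P_{3}}$-decomposition of $D$ exists if and only if $|A(X,Y)|=|A(Y,X)|$---which is exactly the condition $d^{+}(X)=d^{-}(X)$---together with Hall's inequality $|N_{L(D)}(S)|\ge |S|$ for every $S\subseteq A(X,Y)$.

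The engine of the proof is a closed-form description of the neighbourhood. Given $S\subseteq A(X,Y)$, let $X_{1}$ be the set of tails and $Y_{1}$ the set of heads of arcs in $S$. An arc $(y,x)\in A(Y,X)$ is adjacent to some arc of $S$ in $L(D)$ if and only if $y\in Y_{1}$ or $x\in X_{1}$, so $N_{L(D)}(S)=A(Y_{1},X)\cup A(Y,X_{1})$, and inclusion--exclusion yields
\[
|N_{L(D)}(S)|\;=\;d^{+}(Y_{1})+d^{-}(X_{1})-a(Y_{1},X_{1}).
\]

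With this identity in hand, both directions become short. For the necessity, given arbitrary $X_{1}\subseteq X$ and $Y_{1}\subseteq Y$, I apply Hall's condition to $S:=A(X_{1},Y_{1})$: then $|S|=a(X_{1},Y_{1})$, and since the true tail and head sets of $S$ lie inside $X_{1}$ and $Y_{1}$, the inclusion $N_{L(D)}(S)\subseteq A(Y_{1},X)\cup A(Y,X_{1})$ gives $|N_{L(D)}(S)|\le d^{+}(Y_{1})+d^{-}(X_{1})-a(Y_{1},X_{1})$; rearranging Hall's inequality produces precisely $a(X_{1},Y_{1})+a(Y_{1},X_{1})\le d^{+}(Y_{1})+d^{-}(X_{1})$. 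For the sufficiency, given any $S\subseteq A(X,Y)$, I take $X_{1}$ and $Y_{1}$ to be the tail and head sets of $S$; then $|S|\le a(X_{1},Y_{1})$ and the displayed formula now computes $|N_{L(D)}(S)|$ exactly, so the hypothesized inequality at $(X_{1},Y_{1})$ delivers $|S|\le |N_{L(D)}(S)|$, verifying Hall's condition on $L(D)$. Combined with $|A(X,Y)|=|A(Y,X)|$ from the first hypothesis, Corollary 1.6 yields a perfect matching of $L(D)$, equivalently a $\overrightarrow{P_{3}}$-decomposition. I do not anticipate a substantive obstacle beyond a careful check of the inclusion $N_{L(D)}(S)\subseteq A(Y_{1},X)\cup A(Y,X_{1})$ in the necessity step, which is immediate from the definitions of tail and head sets; all matching-theoretic input is supplied by Corollary 1.6.
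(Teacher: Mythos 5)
Your proof is correct and follows essentially the same route as the paper: identify $L(D)$ as a bipartite graph on $\bigl(A(X,Y),A(Y,X)\bigr)$, compute $|N_{L(D)}(S)|=d^{+}(Y_{1})+d^{-}(X_{1})-a(Y_{1},X_{1})$ via inclusion--exclusion, and apply Corollary 1.6 in both directions with $S=A(X_{1},Y_{1})$ (necessity) and with $X_{1},Y_{1}$ the tail/head sets of an arbitrary $S$ (sufficiency). Your explicit use of the inclusion $N_{L(D)}(S)\subseteq A(Y_{1},X)\cup A(Y,X_{1})$ in the necessity step is in fact slightly more careful than the paper's stated equality, which can fail when some vertex of $X_{1}$ or $Y_{1}$ sends or receives no arc of $A(X_{1},Y_{1})$.
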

\begin{proof} Let $X'=A(X, Y), Y'=A(Y, X)$. Since $D=D[X, Y]$ is a bipartite digraph,
both $X'$ and $Y'$ are independent sets of $L(D)$. So,
$L(D)=L(D)[X', Y']$ is a bipartite graph. By Proposition 1.2, $D$
has a $\overrightarrow{P_{3}}$-decomposition if and only if $L(D)$
has a perfect matching.

First, we assume that $D$ has a
$\overrightarrow{P_{3}}$-decomposition. For two subsets
$X_{1}\subseteq X, Y _{1}\subseteq Y$, let $S=A(X_{1}, Y_{1})$.
Clearly, $S\subseteq X'$. One can see that
$$N_{L(D)}(S)=\partial^-(X_1)\cup \partial^+(Y_1)$$ and thus
$$|N_{L(D)}(S)|=d^{+}(Y_{1})+d^{-}(X_{1})-a(Y_{1}, X_{1}).$$

By Proposition 1.2 and Corollary 1.6, $d^{+}(X)=d^{-}(X)$ and
$$a(X_1,Y_1)=|S|\leq |N(S)|=d^{+}(Y_{1})+d^{-}(X_{1})-a(Y_{1},
X_{1}).$$ Equivalently,
$$a(X_{1}, Y_{1})+a(Y_{1},X_{1})\leq d^{+}(Y_{1})+d^{-}(X_{1})$$ for
any $X_{1}\subseteq X, Y_{1}\subseteq Y$.

Conversely, to prove that $D$ has a
$\overrightarrow{P_3}$-decomposition, it suffices to prove that
$L(D)$ has a perfect matching. For any $S\subseteq X'$, let $X_{1}$
be the set of tails of arcs in $S$ and $Y_{1}$ be the set of heads
of arcs in $S$. Clearly, $S\subseteq A(X_{1}, Y_{1})$ and
$N_{L(D)}(S)=\partial^{+}(Y_{1})\cup
\partial^{-}(X_{1})$. Since $$a(X_{1}, Y_{1})+a(Y_{1},X_{1})\leq
d^{+}(Y_{1})+d^{-}(X_{1}), $$ we conclude that

$$|N_{L(D)}(S)|=d^{+}(Y_{1})+d^{-}(X_{1})-a(Y_{1},X_{1})\geq
a(X_{1}, Y_{1})\geq |S|$$

By Corollary 1.6, $L(D)$ has a perfect matching.
\end{proof}

\section {\large Concluding remarks}

In this paper, we give a complete characterization for a tournament
and a bipartite digraph admitting a
$\overrightarrow{P_{3}}$-decomposition. However, the general case
still remains to be unsolved.

\vspace{2mm}\noindent {\bf Problem 1.} (\cite{D}) A simple
characterization of all $\overrightarrow{P_3}$-decomposable
digraphs.

\vspace{2mm} It is interesting that we established, in Theorem 3.1,
a necessary and sufficient condition for the existence of a
fractional perfect matching in $L(D)$ of a digraph $D$. Recall that
a path or a cycle which contains every vertex of a graph is called a
{\it Hamilton path} or {\it cycle} of the graph. A graph is {\it
hamiltonian} if it contains a Hamilton cycle. It is well known that
the problem of deciding whether a given graph is hamiltonian is
$\mathcal{NP}$-complete \cite{Garey}. A {\it directed Euler trail}
is a directed trail which traverses each arc of the digraph exactly
once, and a {\it directed Euler tour} is a directed tour with this
same property. A digraph is {\it eulerian} if it admits a directed
Euler tour. It is well known that a connected digraph $D$ is
eulerian if and only if $d^+(v)=d^-(v)$ for any vertex $v\in V(D)$.
R\'{e}dei \cite{Redei} proved that every tournament has a directed
Hamilton path. Camion \cite{Camion} showed that every nontrivial
strong tournament has a directed Hamilton cycle.

\begin{theorem} If a digraph $D$ of size $m\geq 3$ is eulerian, then
$L(D)$ is hamiltonian.
\end{theorem}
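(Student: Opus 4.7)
The plan is to convert a directed Euler tour of $D$ directly into a Hamilton cycle of $L(D)$. Since $D$ is eulerian, I would first fix a directed Euler tour $a_1, a_2, \ldots, a_m, a_{m+1}=a_1$, in which each arc of $D$ occurs exactly once and, for every index $i$ (read cyclically), the head of $a_i$ coincides with the tail of $a_{i+1}$. The vertex set of $L(D)$ is exactly $\{a_1,\ldots,a_m\}$, so this list already lines up the vertices of $L(D)$ in a cyclic order that visits each of them exactly once; all that remains is to promote it from a list to a cycle.

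Next I would verify that each consecutive pair $a_i, a_{i+1}$ is in fact an edge of $L(D)$. In the strict asymmetric setting implicit throughout the paper, writing $a_i=(u,v)$ and $a_{i+1}=(v,w)$ forces $u\neq w$ (otherwise both $(u,v)$ and $(v,u)$ would be arcs, violating asymmetry), so the two arcs induce the directed path $u\to v\to w$ of length $2$. By Definition~1.1 this is exactly the adjacency relation in $L(D)$, so $a_ia_{i+1}\in E(L(D))$ for every $i$.

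With this in hand, the cyclic sequence $a_1a_2\cdots a_ma_1$ is a closed walk in $L(D)$ that uses every vertex exactly once. Because $m\geq 3$, the walk has length at least $3$, so it is a genuine cycle rather than a degenerate loop or pair of parallel edges, and hence a Hamilton cycle of $L(D)$.

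The only delicate point, and the one that I expect to be the main obstacle if one wanted to drop the asymmetry assumption, is avoiding the case in which consecutive arcs of the Euler tour form a $2$-cycle $u\to v\to u$ rather than a directed $P_3$; under asymmetry this is automatic, but in the general case one would have to choose the Euler tour carefully (or locally reroute it at each offending spot), and the hypothesis $m\geq 3$ is precisely what gives enough room to do so. In the present strict asymmetric setting, however, no rerouting is needed and the one-line translation of the Euler tour into a Hamilton cycle of $L(D)$ completes the proof.
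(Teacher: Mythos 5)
Your proof is correct and is essentially the paper's own argument: fix a directed Euler tour and read off the cyclic arc sequence as a Hamilton cycle of $L(D)$. You are in fact slightly more careful than the paper, which does not explicitly address the possibility of consecutive arcs forming a $2$-cycle $u\to v\to u$ (ruled out here by the strict asymmetric convention, as you note).
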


\begin{proof} Let $v_0, a_1, v_2, a_2, \cdots, a_m, v_0$ be an eulerian
trail, where $v_{i-1}$ and $v_i$ is the tail and the head of $a_i$,
respectively, for each $i\in \{1, \ldots, m\}$. Then $a_1, a_2,
\ldots, a_m$ is a Hamilton cycle of $L(D)$.
\end{proof}

The hamiltonian properties of line graphs have been widely studied.
Two well-known conjectures in hamiltonian graph theory are due to
Thomassen \cite{Th} and Matthews and Sumner \cite{Matt},
respectively.

\vspace{2mm}\noindent{\bf Conjecture 1.} (Thomassen \cite{Th}) Every
4-connected line graph is hamiltonian.

\vspace{2mm}\noindent{\bf Conjecture 2.} ( Matthews and Sumner
\cite{Matt}) Every 4-connected claw-free graph is hamiltonian.

Ryj\'{a}\v{c}ek \cite{R} proved that the above two conjectures are
equivalent. Very recently, Li et al. \cite{Li} investigated a class
of spanning subgraphs of a line graph.

\vspace{2mm}\noindent{\bf Definition 2.} Let $G$ be a graph. A graph
is called $SL(G)$ if

\noindent (1) it is a spanning subgraph of $L(G)$, and

\noindent (2) in this graph every vertex $e=uv$ is adjacent to at
least $\min\{d_G(u)-1, \lceil \frac 3 4 d_G(u)+\frac 1 2\rceil \}$
vertices of $E_G(u)$ and also adjacent to at least $\min\{d_G(v)-1,
\lceil \frac 3 4 d_G(v)+\frac 1 2\rceil \}$ vertices of $E_G(v)$.

\vspace{2mm} Li et al. \cite{Li} proposed the following conjecture,
and proved that it is equivalent to Conjectures 1 and 2.

\vspace{2mm}\noindent{\bf Conjecture 3.} (Li et al. \cite{Li}) Every
4-connected $SL(G)$ is hamiltonian.

\vspace{2mm} Note that $L(D)$ is a spanning subgraph of $L(G)$ for
any orientation $D$ of a graph $G$. So, if $L(D)$ is hamiltonian,
then $L(G)$ is hamiltonian. However, there exists a graph $G$ whose
line graph is hamiltonian, but $L(D)$ is nonhamiltonian for any
orientation $D$ of $G$. Such an example is shown in Figure 2.

\begin{center}
\scalebox{0.2}{\includegraphics{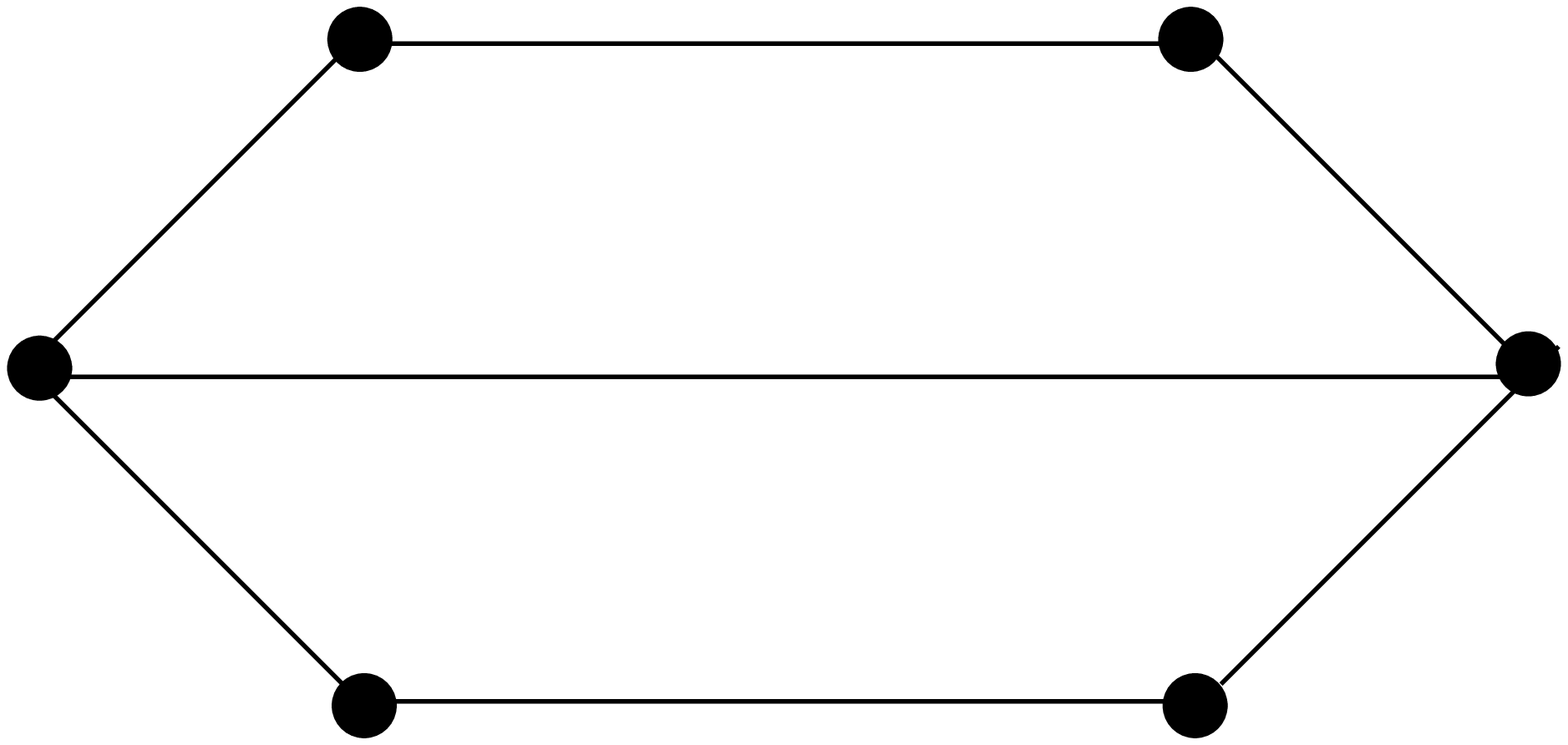}}\\
\vspace{0.5cm} Fig. 2. A graph $G$ whose $L(D)$ is nonhamiltonian
any orientation $D$ of $G$
\end{center}

It is natural to look for some other reasonable necessary or
sufficient conditions for the existence of a Hamilton cycle in
$L(D)$ for a digraph $D$. When does $L(T)$ have a Hamilton path or
Hamilton cycle for a tournament $T$ ? More generally, one considers
the following problem.

\vspace{2mm}\noindent {\bf Problem 2.} Characterize all digraphs $D$
whose line graph $L(D)$ is hamiltonian (or line graph $L(D)$ has a
2-factor or possesses some other properties).

\vspace{2mm} Beineke \cite{Be} proved that a graph $H$ is a line
graph of a graph if and only if it has no induced subgraph
isomorphic to one of the nine graphs as shown in Figure 3.

One naturally poses the following problem.

\vspace{2mm}\noindent {\bf Problem 3.} Characterize those graphs
that are line graphs of some digraph.

\vspace{2mm} It is straightforward to check that the line graph of
any strict digraph has no an induced subgraph isomorphic to $K_4\backslash e$,
which is the graph obtained from $K_4$ deleting an edge. Moreover,
the line graph of any strict asymmetric bipartite digraph has no an
induced subgraph isomorphic to $K_{3,3}\backslash e$, which is the graph
obtained from $K_{3,3}$ deleting an edge.

\begin{center}
\scalebox{0.4}{\includegraphics{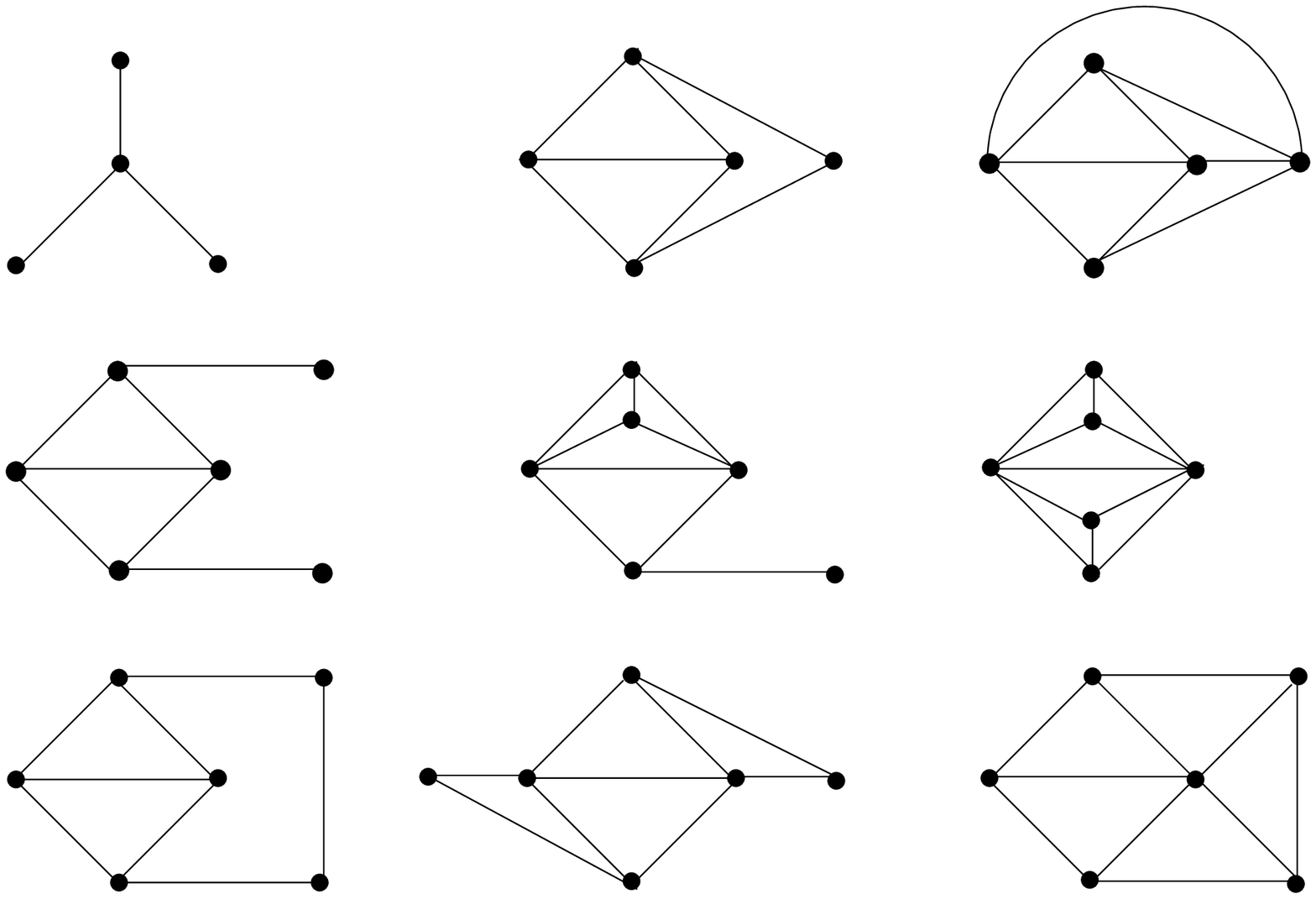}}\\
\vspace{0.5cm} Fig. 3. Nine forbidden subgraphs for a line graph
\end{center}

\noindent {\bf Problem 4.} Does there exist a forbidden subgraph
characterization for a line graph of some digraph.

\end{document}